\newtheorem{theorem}{Theorem}
\newtheorem{lemma}{Lemma}
\newtheorem{corollary}{Corollary}
\newtheorem{remark}{Remark}
\begin{document}

\title[Additive uniqueness of $\mathtt{PRIMES}-1$ for multiplicative functions]{Additive uniqueness of $\mathtt{PRIMES}-1$\\ for multiplicative functions}
\author{Poo-Sung Park}
\address{Department of Mathematics Education, Kyungnam University, Changwon, Republic of Korea}
\email{pspark@kyungnam.ac.kr}

\thanks{This research was supported by Basic Science Research Program through the National Research Foundation of Korea(NRF) funded by the Ministry of  Science, ICT \& Future Planning(NRF-2017R1A2B1010761).}

\keywords{multiplicative function, additive uniqueness set, functional equation}

\maketitle

\begin{abstract}
Let $\mathtt{PRIMES}$ be the set of all primes. We show that a multiplicative function which satisfies
\[
f(p+q-2) = f(p) + f(q) - f(2) \text{ for }p,q \in \mathtt{PRIMES}
\]
is one of the following:
\begin{enumerate}
\item $f$ is the identity function 
\item $f$ is the constant function with $f(n)=1$
\item $f(n)=0$ for $n \ge2$ unless $n$ is odd and squareful. 
\end{enumerate}
As a consequence, a multiplicative function which satisfies
\[
f(a+b) = f(a) + f(b) \text{ for }a,b \in \mathtt{PRIMES}-1
\]
is the identity function.
\end{abstract}

\section{Introduction}

Let $\mathtt{PRIMES}$ be the set of all primes. Denote the set of numbers which are one less than prime numbers by $\mathtt{PRIMES}-1$. That is,
\[
\mathtt{PRIMES}-1 = \{ 1, 2, 4, 6, 10, 12, 16, 18, 22, 28, 30, 36, 40, 42, \dots \}.
\]

In 1992, Claudia Spiro \cite{Spiro} called a set $E \subseteq \mathbb{N}$ \emph{additive uniqueness set} for the set $S$ of arithmetic functions, if there exists exactly one element $f \in S$ which satisfies
\[
f(a+b) = f(a)+f(b) \text{ for all } a, b \in E.
\]
She showed that $\mathtt{PRIMES}$ is the additive uniqueness set for the set 
\[
\{f\text{ multiplicative}, f(p_0) \ne 0\text{ for some prime }p_0\}.
\]
Many mathematicians have studied similar themes since Spiro's article. One can find various papers \cite{C-C, CFYZ, C-P, DK-K-P, D-S, Fang, I-P, Phong1997, Phong2004}. In this article, we show that $\mathtt{PRIMES}-1$ is the additive uniqueness set for the set of multiplicative functions without the condition $f(p_0)\ne0$. This shows that there exists an additive uniqueness set of the same density as $\mathtt{PRIMES}$ for multiplicative functions.

Furthermore, we characterize multiplicative functions satisfying
\[
f(p+q-2) = f(p)+f(q)-f(2)\text{ for all } p, q \in \mathtt{PRIMES}.
\]

This is also a variation of \cite{CFYZ}, which proved that if $1 \le n_0 \le 10^{6}$ and the multiplicative function $f$ satisfies
\[
f(p+q+n_0) = f(p) + f(q) + f(n_0)
\]
for $p, q \in \mathtt{PRIMES}$, then $f(n) = n$ for all $n \ge 1$ or $f(n)=0$ for all $n \ge 2$. 

\section{Results}

\begin{lemma}
If a multiplicative function $f$ satisfies
\[
f(a+b) = f(a) + f(b)
\]
for arbitrary $a, b \in \mathtt{PRIMES}-1$, then $f$ satisfies
\[
f(p+q-2) = f(p)+f(q)-f(2)
\]
for arbitrary $p, q \in \mathtt{PRIMES}$.
\end{lemma}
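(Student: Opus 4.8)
The plan is to exploit the elementary observation that $p-1 \in \mathtt{PRIMES}-1$ for every prime $p$, so that the hypothesis can be applied with $a = p-1$ and $b = q-1$ directly. First I would substitute these values into $f(a+b) = f(a)+f(b)$; since $(p-1)+(q-1) = p+q-2$, this yields
\[
f(p+q-2) = f(p-1) + f(q-1)
\]
for all primes $p,q$. At this point the target identity $f(p+q-2) = f(p)+f(q)-f(2)$ is reduced to understanding how $f(p-1)$ compares with $f(p)$.

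To build that bridge I would use the distinguished element $1 = 2-1 \in \mathtt{PRIMES}-1$ coming from the prime $2$. Because $f$ is multiplicative we have $f(1)=1$. Applying the hypothesis with $a=p-1$ and $b=1$, and noting that $(p-1)+1 = p$, gives
\[
f(p) = f(p-1) + f(1) = f(p-1) + 1,
\]
so that $f(p-1) = f(p)-1$ for every prime $p$. Specializing to $p=2$ (where $p-1=1$) records the normalization $f(2) = f(1)+1 = 2$.

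Combining the two displays is then a one-line substitution: replacing $f(p-1)$ by $f(p)-1$ and $f(q-1)$ by $f(q)-1$ turns $f(p+q-2) = f(p-1)+f(q-1)$ into $f(p)+f(q)-2 = f(p)+f(q)-f(2)$, which is exactly the claim. The only point requiring care --- and the closest thing to an obstacle --- is recognizing that additivity over $\mathtt{PRIMES}-1$ already forces $f(2)=2$, so the constant $f(2)$ on the right-hand side is not free but is pinned down by the hypothesis via the prime $2$; once this shift-by-one mechanism is spotted, the proof is a direct computation requiring no case analysis.
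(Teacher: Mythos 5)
Your proof is correct and follows essentially the same route as the paper's: both apply the hypothesis with $a=p-1$, $b=q-1$, establish the shift relation $f(p)=f(p-1)+f(1)$ using $1=2-1\in\mathtt{PRIMES}-1$ and $f(1)=1$ from multiplicativity, and note $f(2)=2$ before substituting. No gaps; nothing further is needed.
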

\begin{proof}
Note that $f(2) = f(1)+f(1) = 2$. For $p$ prime and $q=2$, we have that 
\[
f(p) = f\big( (p-1) + (2-1) \big) = f(p-1) + f(1).
\]
Thus, $f(p+q-2) = f(p-1) + f(q-1) = f(p) + f(q) - f(2)$.
\end{proof}

Now we investigate the multiplicative function satisfying $f(p+q-2) = f(p)+f(q)-f(2)$ rather than $f\big( (p-1) + (q-1) \big)$.

\begin{lemma}\label{lem:f(2)=012}
If a multiplicative function $f$ satisfies
\[
f(p+q-2) = f(p) + f(q) - f(2)
\]
for arbitrary $p, q \in \mathtt{PRIMES}$, then $f(2) = 0, 1$, or $2$. Also, $f(n)$ is determined for $3 \le n \le 18$ according to $f(2)$. But, $f(9)$ is not determined when $f(2)=0$.
\end{lemma}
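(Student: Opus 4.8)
The plan is to feed small primes into the functional equation and combine the resulting identities with multiplicativity, reducing every small value of $f$ to the two unknowns $a := f(2)$ and $x := f(3)$. The raw inputs are the Goldbach-type representations of the even numbers $n+2$ for small $n$: each decomposition $n+2 = p+q$ gives $f(n) = f(p)+f(q)-a$, and one checks directly (a finite verification) that every even number from $6$ up to $22$ is a sum of two primes, several of them in more than one way. Whenever $n$ is composite its factorization supplies a \emph{second} expression for $f(n)$ via multiplicativity, e.g.\ $f(6)=f(2)f(3)$, $f(10)=f(2)f(5)$, $f(12)=f(4)f(3)$, $f(14)=f(2)f(7)$, $f(18)=f(2)f(9)$; equating the two expressions is what produces the genuine constraints.

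First I would climb the chain $f(2)\to f(5)\to f(7)$. The representation $8=3+5$ with $f(6)=f(2)f(3)$ gives $f(5)=(a-1)x+a$, and $12=5+7$ with $f(10)=f(2)f(5)$ gives $f(7)=(a-1)f(5)+a$. Since $10=3+7=5+5$ has two representations, $f(8)=f(3)+f(7)-a=2f(5)-a$; equating the two formulas for $f(7)$ collapses to the factored identity $(a-2)\big[(a-2)x+a\big]=0$. An independent relation comes from $n=12$: writing $f(12)$ both as $f(4)f(3)=(2x-a)x$ (using $f(4)=2f(3)-f(2)$ from $4=3+3-2$) and as $2f(7)-a$ yields a quadratic in $x$ with coefficients in $a$. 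Eliminating $x$ between the linear and the quadratic relation gives $a^{2}(a-1)(a-2)^{2}=0$, so $f(2)=a\in\{0,1,2\}$.

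Next I would read off the small values case by case. When $a=1$ the linear relation forces $x=1$ and the chain makes every value equal to $1$. When $a=0$ it forces $x=0$ and every in-range $f(n)$ vanishes, \emph{except} that the only in-range equation involving $f(9)$ is $f(18)=f(2)f(9)$, which is vacuous once $f(2)=0$ (the additive expressions $f(18)=f(5)+f(13)-a=f(7)+f(11)-a$ never mention $f(9)$) — this is precisely why $f(9)$ escapes determination. When $a=2$ the quadratic has roots $x=3$ and $x=-1$; all of $f(3),\dots,f(18)$ are then expressed in terms of $x$, and $x=3$ is the identity-type solution.

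The one delicate point is eliminating the spurious root $x=-1$ in the case $a=2$, since it turns out to be consistent with every relation coming from $n\le 18$. I expect to resolve it with one further composite: comparing $f(20)=f(4)f(5)$ against the additive value forced by $22=11+11$, namely $f(20)=2f(11)-a$, yields $x^{2}=9$ and hence $x=3$. Thus the real obstacle is not the algebra but the bookkeeping — for each residual ambiguity one must locate a composite number just large enough to carry an extra multiplicative constraint, while confirming that $f(9)$ alone is never pinned down this way when $f(2)=0$.
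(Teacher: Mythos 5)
Your proposal is correct and is essentially the paper's own argument made explicit: it uses the very same instances of the functional equation at $n=4,6,8,10,12,16,18,20$ combined with the same multiplicative identities ($f(6)=f(2)f(3)$, $f(10)=f(2)f(5)$, $f(12)=f(3)f(4)$, $f(18)=f(2)f(9)$, $f(20)=f(4)f(5)$), with the $f(20)$ relation playing precisely the role it plays in the paper of eliminating the spurious solution ($f(3)=-1$ when $f(2)=2$); the only difference is that you carry out the elimination algebra which the paper merely asserts in the phrase ``we obtain three cases.'' One harmless slip: the additive expressions for $f(18)$ come from $20=3+17=7+13$, i.e.\ $f(18)=f(3)+f(17)-a=f(7)+f(13)-a$ (what you wrote are the decompositions of $18$, which yield $f(16)$), but your conclusion that no additive relation ever involves $f(9)$ --- so that $f(9)$ escapes determination exactly when $f(2)=0$ --- is unaffected.
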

\begin{proof}
Since $f$ is multiplicative, we have $f(1)=1$. From the equalities
\begin{align*}
f(4) 
&= f(3+3-2) = f(3) + f(3) - f(2) \\
f(6) 
&= f(3+5-2) = f(3) + f(5) - f(2) \\
&= f(2) f(3) \\
f(8)
&= f(3+7-2) = f(3)+f(7)-f(2) \\
&= f(5+5-2) = f(5)+f(5)-f(2) \\
f(10) 
&= f(5+7-2) = f(5) + f(7) - f(2) \\
&= f(2) f(5) \\
f(12) 
&= f(3+11-2) = f(3) + f(11) - f(2) \\
&= f(7+7-2) = f(7) + f(7) - f(2) \\
&= f(3) f(4) \\
f(20) 
&= f(11+11-2) = f(11) + f(11) - f(2) \\
&= f(4) f(5)
\end{align*}
we obtain three cases:
\[
\begin{array}{lllllll}
f(2) = 2, &f(3) = 3, &f(4) = 4, &f(5) = 5, &f(7) = 7, &f(8) = 8, &f(11) = 11; \\
f(2) = 1, &f(3) = 1, &f(4) = 1, &f(5) = 1, &f(7) = 1, &f(8) = 1, &f(11) = 1; \\
f(2) = 0, &f(3) = 0, &f(4) = 0, &f(5) = 0, &f(7) = 0, &f(8) = 0, &f(11) = 0.
\end{array}
\]

Besides, since
\begin{align*}
f(16)
&= f(5+13-2) = f(5)+f(13)-f(2) \\
&= f(7+11-2) = f(7)+f(11)-f(2) \\
f(18) 
&= f(3+17-2) = f(3)+f(17)-f(2) \\
&= f(7+13-2) = f(7)+f(13)-f(2) \\
&= f(2)f(9),
\end{align*}
we can conclude that for $3 \le n \le 18$
\[
f(n)=
\begin{cases}
n & \text{if }f(2)=2,\\
1 & \text{if }f(2)=1,\\
0 & \text{if }f(2)=0 \text{ and }n \ne 9.
\end{cases}
\]
\end{proof}

\begin{theorem}\label{thm:f(2)=2}
If a multiplicative function $f$ satisfies
\[
f(p+q-2) = f(p) + f(q) - f(2)
\]
for arbitrary $p, q \in \mathtt{PRIMES}$ and $f(2)=2$, then $f$ is the identity function.
\end{theorem}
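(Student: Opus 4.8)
The plan is to prove $f(n)=n$ for every $n$ by strong induction on $n$, the base case $n\le 18$ being supplied by Lemma \ref{lem:f(2)=012}. Throughout I would lean on the two structural facts at hand: multiplicativity, and the observation that for odd primes $p,q$ the argument $p+q-2$ is \emph{even}, while the special choices $q=3$ and $q=5$ give the handy shifts $f(p+1)=f(p)+1$ and $f(p+3)=f(p)+3$. The first reduction is that multiplicativity disposes of every $n$ that is neither a prime power nor a power of $2$: writing $n=2^{a}b$ with $b$ odd, if $b\ge 3$ then $f(n)=f(2^{a})f(b)$ with both $2^{a},b<n$, and if $n$ is odd but not a prime power then $n=st$ with coprime $s,t<n$; either way the inductive hypothesis closes the step. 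So only primes, powers of $2$, and odd prime powers remain.

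For an ordinary prime $p\ge 19$ I would argue entirely within the induction. Choose $q=3$ if $p\equiv 1\pmod 4$ and $q=5$ if $p\equiv 3\pmod 4$; then $p+q\equiv 0\pmod 4$, so $p+q-2=2m$ with $m=(p+q-2)/2$ odd and $m<p$. The functional equation gives $f(2m)=f(p)+f(q)-2=f(p)+(q-2)$, while multiplicativity plus the inductive hypothesis gives $f(2m)=f(2)f(m)=2m$. Comparing yields $f(p)=2m-(q-2)=p$. The purpose of the congruence bookkeeping is exactly to force $p+q-2$ to have $2$-adic valuation $1$, so that its odd half is a strictly smaller argument already known to the induction. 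Powers of $2$ instead use the equation in raw form: to get $f(2^{a})$ I would write $2^{a}+2=P+Q$ with $P,Q$ odd primes, so that $P,Q\le 2^{a}-1<2^{a}$ are covered by the induction and $f(2^{a})=f(P)+f(Q)-2=2^{a}$. A two-prime representation of $2^{a}+2$ is needed here, but crucially both summands are automatically smaller than $2^{a}$, so the step stays inside the induction.

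The genuine obstacle is the odd prime powers $p^{k}$ with $k\ge 2$. Since $p+q-2$ is even for all odd primes $p,q$, the functional equation never constrains $f$ at an odd argument directly; the only handle on $f(p^{k})$ is the relation $f(2p^{k})=f(2)f(p^{k})=2f(p^{k})$, which forces me to pin down the even number $2p^{k}$ first. Unlike the power-of-$2$ case, this even number is \emph{larger} than $p^{k}$, so it falls outside a naive induction on $n$. I expect this to be the hard part, and I would resolve it one of two ways: either reorganise the argument to establish $f(N)=N$ for all even $N$ first (recovering the odd values at the end via $f(2m)=2f(m)$), or invoke Dirichlet's theorem to produce a prime $P$ with $v_{p}(P+1)=k$ and an otherwise controlled cofactor (ideally $P+1=2^{b}p^{k}$), and then read off $f(p^{k})$ from $f(P+1)=f(P)+1$ together with multiplicativity. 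The delicate point in either route is guaranteeing that the auxiliary even number $2p^{k}$, respectively the cofactor of $P+1$, is representable or already known; this is precisely where the main number-theoretic input of the proof must enter.
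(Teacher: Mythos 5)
Your opening reductions (multiplicativity for non-prime-powers, and the choice $q\in\{3,5\}$ with $p+q\equiv 0\pmod 4$ so that $p+q-2$ has $2$-adic valuation $1$ and its odd half falls under the induction) are exactly the paper's moves in its Lemma \ref{lem:SumOfTwoSquaresWhenf(2)=2}. But the two places where you defer the ``main number-theoretic input'' are genuine gaps, and the routes you sketch to fill them do not work. For powers of $2$ you need $2^{a}+2$ to be a sum of two primes, and for odd prime powers your route (1) needs every large even number (e.g.\ $2p^{k}+2$) to be so representable: both are instances of the Goldbach conjecture, which is open. Your route (2) is worse than open: Dirichlet's theorem produces infinitely many primes $P\equiv -1 \pmod{p^{k}}$, but it gives no control whatsoever over the cofactor $(P+1)/p^{k}$; demanding $P+1=2^{b}p^{k}$ exactly is asking for primes of a special multiplicative shape (a Cunningham-chain-type condition), whose existence for every $p^{k}$ is not a consequence of Dirichlet and is not known. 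So the induction cannot be closed by either suggestion.

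The paper closes this gap with a two-tier argument you are missing entirely. First, it proves the induction \emph{conditionally}: if every even number up to $2N$ is a sum of two primes, then $f(n)=n$ for $n\le N-2$; combined with the numerical verification of Goldbach up to $4\times 10^{18}$, this settles all $n$ below roughly $10^{10}$ unconditionally (this is where your power-of-$2$ and prime-power cases get absorbed, using representations $2(M+1)+2=p+q$ inside the verified range). Second, for large $n$ it abandons pointwise Goldbach and uses a density argument: the Spiro set $H$ (Lemma \ref{lem:m+q_in_H}) yields $f(n)=n$ for all $n\in H$, and then, for a minimal counterexample $n$, the set $H_n$ of Lemma \ref{lem:H_n} consists of even multiples $k$ of $n$ with $f(k)=f(n)\,k/n$; if any such $k$ equals $p+q-2$ then $f(n)=n$, so every $k+2$ with $k \in H_n$ would fail to be a sum of two primes --- contradicting the theorem that almost all even numbers are (Lemma \ref{lem:almost_every_integer}), since $H_n$ has positive lower density. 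Replacing pointwise Goldbach by this ``almost all'' theorem is precisely the idea your proposal lacks, and without it (or some equally strong substitute) the hard cases you correctly identified remain unproved.
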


The following corollary follows immediately.
 
\begin{corollary}
$\mathtt{PRIMES}$ is an additive uniqueness set for the set of multiplicative functions.
\end{corollary}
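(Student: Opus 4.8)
The plan is to recover Spiro's theorem in the precise form asserted: the identity is the unique multiplicative solution of $f(p+q)=f(p)+f(q)$ for $p,q\in\mathtt{PRIMES}$, \emph{once one restricts to the functions with $f(p_0)\neq0$ for some prime $p_0$} used in the set quoted in the introduction. This restriction is genuinely necessary, and I would flag this at the outset: the multiplicative function with $f(1)=1$ and $f(n)=0$ for all $n\geq2$ also satisfies $f(p+q)=0=f(p)+f(q)$, so without the hypothesis $f(p_0)\neq0$ uniqueness fails. Existence is immediate, since the identity gives $f(p+q)=p+q=f(p)+f(q)$.

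For uniqueness I would first rerun the small-value analysis of Lemma~\ref{lem:f(2)=012} verbatim, but for the present (unshifted) equation. Writing $a=f(2)$ and $b=f(3)$ and combining multiplicativity with $f(6)=f(3+3)$, $f(10)=f(5+5)$, and $f(12)=f(5+7)$ yields $b(a-2)=0$, $(a+b)(a-2)=0$, and $3a+2b=2ab$. These force $a\in\{0,2\}$; the value $a=1$ is inconsistent, so the constant function drops out automatically here. If $a=0$, then for every odd prime $p$ the chain $2f(p)=f(p+p)=f(2p)=f(2)f(p)=0$ gives $f(p)=0$, so $f$ vanishes on all of $\mathtt{PRIMES}$, contradicting $f(p_0)\neq0$. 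Hence $a=f(2)=2$, and then $b=f(3)=3$, $f(5)=5$, $f(7)=7$.

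With $f(2)=2$ in hand, the key structural relation comes from taking $q=2$: $f(r+2)=f(r)+2$ for every prime $r$. Together with multiplicativity this pins down $f$ at each prime by induction. Assuming $f(n)=n$ for all $n<p$, I would choose a small prime $q<p$ for which $p+q$ is composite and not a prime power; since $p+q<2p$, any maximal prime-power factor of $p+q$ that were $\geq p$ would force $p+q$ itself to be that prime power, so in the non-prime-power case every such factor lies below $p$. Multiplicativity then gives $f(p+q)=p+q$, whence $f(p)=f(p+q)-f(q)=p$. For instance $f(11)=f(14)-f(3)=f(2)f(7)-3=11$. Once $f(p)=p$ for all primes, I would extend to prime powers $p^{a}$ and thence, by multiplicativity, to all $n$, giving $f=\mathrm{id}$.

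The hard part is exactly this last covering step, and it is the analog of the technical core of Theorem~\ref{thm:f(2)=2}. One cannot simply cite that theorem: although $f(2)=2$ already supplies the boundary relations $f(r+2)=f(r)+2$ at prime arguments $r$, deducing the shifted identity $f(p+q-2)=f(p)+f(q)-2$ for odd primes $p,q$ would require controlling $f$ at the intermediate even numbers $p+q-2$, which in general are not of the form (prime)$+2$, so the relation $f(r+2)=f(r)+2$ does not reach them and the reduction to the shifted equation is unavailable. The prime and prime-power values must therefore be secured directly, by a Goldbach-type argument that realizes each prime power $p^{a}$, through a coprime multiple, as a sum of two primes whose $f$-values are already known — and, as in Spiro's original proof, this must be arranged through explicit finite constructions rather than by appealing to Goldbach's conjecture itself.
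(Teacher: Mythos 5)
You have been led astray by what is almost certainly a misprint in the statement. Read in context, the corollary is about $\mathtt{PRIMES}-1$, not $\mathtt{PRIMES}$: it is placed directly after Theorem \ref{thm:f(2)=2} with the words ``follows immediately,'' and the abstract and introduction announce exactly this consequence --- a multiplicative $f$ with $f(a+b)=f(a)+f(b)$ for $a,b\in\mathtt{PRIMES}-1$ is the identity, \emph{with no side condition} $f(p_0)\neq0$. The paper's proof is accordingly two lines: since $1\in\mathtt{PRIMES}-1$, multiplicativity forces $f(2)=f(1+1)=2f(1)=2$; the paper's first lemma converts additivity on $\mathtt{PRIMES}-1$ into $f(p+q-2)=f(p)+f(q)-f(2)$ for all primes $p,q$; and Theorem \ref{thm:f(2)=2} then gives $f=\mathrm{id}$. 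Your opening observation is correct and worth making --- for $\mathtt{PRIMES}$ itself uniqueness fails, since the multiplicative function vanishing on all $n\geq2$ satisfies $f(p+q)=f(p)+f(q)$; this is precisely the $f(2)=0$ phenomenon of Theorem \ref{thm:f(2)=0} --- but the repair you chose, reinstating Spiro's hypothesis $f(p_0)\neq0$ and reproving her 1992 theorem, targets a different statement. It neither follows from Theorem \ref{thm:f(2)=2} (as you yourself explain, the shifted equation cannot be recovered from the unshifted one) nor yields the intended corollary, and it surrenders the very point of the paper: for $\mathtt{PRIMES}-1$, the presence of $1$ pins down $f(2)$, so the nonvanishing hypothesis becomes unnecessary.

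Even judged on its own terms as a proof of Spiro's theorem, the proposal has a genuine gap exactly where you flag the ``hard part.'' Your small-value relations $b(a-2)=0$, $(a+b)(a-2)=0$, $3a+2b=2ab$ are correct, as is the $a=0$ dichotomy and the induction at prime arguments (choosing $q\in\{3,5\}$ so that $p+q$ is not a power of $2$, whence every maximal prime-power divisor of $p+q<2p$ lies below $p$). But the values $f(p^a)$ for $a\geq2$ are left to an unspecified ``Goldbach-type argument \dots\ through explicit finite constructions,'' and that deferred step is the entire technical content of Spiro's proof and of this paper: one needs the verified-Goldbach induction of Lemma \ref{lem:SumOfTwoSquaresWhenf(2)=2}, the Spiro set $H$ with Lemma \ref{lem:m+q_in_H} to handle large primes, Lemma \ref{lem:n_in_H} to propagate $f(n)=n$ through $H$, and the positive-lower-density sets $H_n$ of Lemma \ref{lem:H_n} played against the almost-all Goldbach theorem (Lemma \ref{lem:almost_every_integer}) to trap a minimal counterexample. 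None of this machinery is reconstructed, so what you have is a sketch whose core is missing --- while the statement actually under review requires none of it beyond the two-line citation of Theorem \ref{thm:f(2)=2} described above.
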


To prove the theorem we need some lemmas. In the first step, we use the fact that every even positive integer smaller than some upper bound is expressible as the sum of two primes. In the second step, we apply the theorem that almost every even number is the sum of two primes.

\begin{lemma}\label{lem:SumOfTwoSquaresWhenf(2)=2}
Let $f$ satisfy the hypothesis of Theorem \ref{thm:f(2)=2}. If every even number $2m$, with $4 \le 2m \le 2N$, can be written as the sum of two primes, then we have $f(n)=n$ for all $n \le N-2$.
\end{lemma}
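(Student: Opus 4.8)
The plan is to prove $f(n)=n$ for all $n\le N-2$ by strong induction on $n$, with base case the equalities $f(n)=n$ for $1\le n\le 18$ already furnished by Lemma \ref{lem:f(2)=012} (we are in the regime $f(2)=2$). It is convenient to record the additive reformulation obtained by setting $g(n)=f(n)-n$: the functional equation becomes $g(p+q-2)=g(p)+g(q)$ for primes $p,q$, with $g\equiv 0$ on $[1,18]$, and the goal is $g\equiv 0$ on $[1,N-2]$. The guiding principle is that the functional equation always expresses $f$ at an \emph{even} argument in terms of two strictly smaller primes, so even numbers reduce downward directly, and odd numbers possessing a coprime factorization reduce downward through multiplicativity; the only arguments not reachable this way are the prime powers, and among these the odd ones will be the crux.

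First I would dispose of the two cases in which $f(n)$ is visibly controlled by strictly smaller arguments. If $n$ is even with $4\le n\le N-2$, then $n+2\le N\le 2N$, so by hypothesis $n+2=p+q$ for primes $p,q$; since $n+2\ge 6$ is even, neither prime equals $2$ (otherwise the other would be the even composite $n$), whence $p,q\le n-1<n$ and the induction hypothesis gives $f(p)=p$ and $f(q)=q$. The functional equation then yields $f(n)=f(p+q-2)=p+q-2=n$. If $n$ is odd and admits a coprime factorization $n=ab$ with $1<a,b<n$, multiplicativity and the induction hypothesis give $f(n)=f(a)f(b)=ab=n$. This settles every $n$ except the prime powers.

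For an odd prime $p\ge 19$ the functional equation cannot produce $p$ as a nontrivial value, so I would instead choose a small prime $q\le 17$ (hence $f(q)=q$) with $p+q-2$ not a power of $2$; since among the six candidates $p+1,p+3,p+5,p+9,p+11,p+15$ at most one is a power of $2$, such a $q$ exists. Factoring $p+q-2=2^{b}m$ with $m>1$ odd gives a coprime product whose factors satisfy $2^{b},m<p$, so multiplicativity returns $f(p+q-2)=2^{b}m=p+q-2$ and the functional equation gives $f(p)=(p+q-2)-q+2=p$. The genuinely delicate case is an odd prime power $n=p^{k}$ with $k\ge 2$: here $n$ is neither a nontrivial sum $p'+q'-2$ (that output is always even) nor a product of two smaller coprime factors, so the only handle is the doubling relation $f(2p^{k})=f(2)f(p^{k})=2f(p^{k})$ coming from $\gcd(2,p^{k})=1$. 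Thus $f(p^{k})$ is pinned down only through the value at the \emph{larger} even argument $2p^{k}\le 2N-4$, which I would evaluate by the even-number argument applied to $2p^{k}$: since $2p^{k}+2\le 2N$ it has a two-prime representation, and dividing by $2$ recovers $f(p^{k})=p^{k}$.

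The main obstacle is precisely this last case, and it explains why the hypothesis is needed up to $2N$ while the conclusion stops at $N-2$: the even number $2p^{k}$ and the two primes representing $2p^{k}+2$ may exceed $n$ (though they stay $\le 2N$), so $f(p^{k})$ is determined by values \emph{above} it rather than below. Consequently the argument is not a plain induction along increasing $n$; the even/odd-composite reductions point downward, whereas the odd-prime-power reduction points upward, and these directions interlock (an odd prime power can reappear as the odd factor $m$ in the companion step for some larger prime). The real work is therefore to arrange the order of resolution into a well-founded scheme—checking that each auxiliary prime arising from a representation of $2p^{k}+2$ is itself settled by the companion-prime argument from data strictly below it, so that no value is invoked before it is established and the entire process bottoms out at the base case $n\le 18$ of Lemma \ref{lem:f(2)=012}. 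Tracking this well-foundedness, rather than any individual computation, is the heart of the proof.
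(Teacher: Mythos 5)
Your handling of the even case, the coprime-composite case, and the odd-prime case is sound (your ``avoid the unique power of two'' trick for primes is a legitimate variant of the paper's congruence choice), but there is a genuine gap exactly where you locate ``the heart of the proof'': you never carry out the well-foundedness argument for odd prime powers, and the scheme as you describe it does not in fact close. For $n=p^{k}\le N-2$ with $k\ge 2$, writing $2p^{k}+2=p'+q'$ with $p'<p^{k}+1<q'$, you must evaluate $f(q')$ at a prime $q'$ that can be as large as $2p^{k}-1$. Your companion-prime step, applied to $q'$, factors $q'+r-2=2^{b}m$ with $m>1$ odd; but when $b=1$ the odd cofactor $m=(q'+r-2)/2$ can be as large as about $p^{k}+7$, i.e.\ strictly above the range where $f$ is known, so $q'$ is \emph{not} ``settled from data strictly below it.'' Worse, if that $m$ is itself an odd prime power, your scheme must go up again to $2m+2$, which can exceed $2N$ and thus leave the range where the lemma's Goldbach hypothesis applies; and since each up-down cycle can increase the value in play by an additive constant, there is no descent, so the claimed bottoming-out at $n\le 18$ is not just unverified but false for the scheme as stated.

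The paper eliminates the upward recursion entirely by a sharper choice of companion prime: it picks $r\in\{3,5,7,17\}$ with $q'+r\equiv 6\pmod 8$ (these four residues cover every odd class modulo $8$). Then $q'+r-2=4m$ with $m$ odd and $m\le(2M+16)/4<M$, so multiplicativity and Lemma \ref{lem:f(2)=012} together with the induction hypothesis give $f(q'+r-2)=f(4)f(m)=q'+r-2$, whence the functional equation yields $f(q')=q'$, and then $2f(p^{k})=f(2p^{k})=f(p'+q'-2)=p'+q'-2=2p^{k}$. The whole point of the congruence modulo $8$ is to force at least two factors of $2$ into $q'+r-2$, which pushes the odd cofactor strictly below $M$; with that single refinement the induction becomes a clean one-pass induction on $M$, and the ``interlocking'' you describe disappears. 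That refinement is the one idea your proposal is missing, and without it (or some equivalent device) the proof is incomplete.
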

\begin{proof}
The proof is almost identical to \cite[p.237 Proof of Lemma 4]{Spiro}. For convenience we write the proof here.

By the Lemma \ref{lem:f(2)=012}, we have that $f(n)=n$ for $1 \le n \le 18$. Assume that $M$ is an integer with $18 \le M \le N-3$ and that we have $f(n)=n$ for all $n \le M$. We will show that $f(M+1)=M+1$.

If $M+1$ is even, then $M+3$ is sum of two primes, say $p$ and $q$. Thus, 
\[
f(M+1) = f( p+q-2 ) = f(p)+f(q)-f(2) = M+1.
\]

Now, suppose that $M+1$ is odd. If $M+1$ is prime, then let $q \in \{3,5\}$ be chosen so that $M+1+q \equiv 0 \pmod{4}$. Note that
\[
f(M+1+q-2) = f(M+1) + f(q) - f(2) 
\]
and thus
\[
f(2)f\!\left( \frac{M+1+q-2}{2} \right) = f(M+1) + f(q) - f(2)
\]
by the multiplicity of $f$. Since $f(n)=n$ for $n \le M$,
\[
2 \!\left( \frac{M+1+q-2}{2} \right) = f(M+1) + q - 2
\]
from which it follows that $f(M+1)=M+1$.

If $M+1 = ab$ with relatively primes $a < M$ and $b < M$, then $f(M+1) = f(ab) = f(a)f(b) = M+1$. Thus, it remains to show $f(M+1)=M+1$ when $M+1$ is a power of an odd prime.

Asume that there exist two primes $p$ and $q$ satifying $2(M+1)+2 = p+q$ with $p < (M+1)+1 < q$.
\[
f\big( 2(M+1) \big) = 2f(M+1) = f(p)+f(q)-f(2) = p+f(q)-2
\]
Choose a prime $r \in \{3, 5, 7, 17\}$ such that $q+r \equiv 6 \pmod{8}$. Then,
\[
f(q + r - 2) = f(q) + f(r) - f(2) = f(q) + r - 2
\]
and
\[
f(q+r-2) = f(4)f\!\left( \frac{q+r-2}{4} \right) = 4 \cdot \frac{q+r-2}{4}
\]
from $\frac{q+r-2}{4} < M$.

Hence, $f(M+1) = M+1$. The lemma is proved.
\end{proof}

If the famous Goldbach Conjecture could be proved true, Theorem \ref{thm:f(2)=2} would be true. But, Goldbach Conjecture is not yet proved. It was numerically verified up to $4\times10^{18}$ \cite{S-H-P}.

To bypass the Goldbach Conjecture, we construct a specific set $H$ and use the fact that the set of even integers which are not the sum of two primes has density zero. In the following lemma, $v_p(n)$ means the exponent of $p$ in the prime factorization of $n$ when $p$ is a prime and $n$ is a positive integer. The set $H$ was defined by Spiro and the numerical verification of Goldbach Conjecture was up to $2\times10^{10}$ at that time. We would call the set $H$ in the lemma the \emph{Spiro set}.

\begin{lemma}\label{lem:m+q_in_H}
Let
\[
H = \{ n \,|\, v_p(n) \le 1 \text{ if } p>1000; v_p(n) \le \lfloor 9\log_p{10} \rfloor - 1 \text{ if } p<1000 \}.
\]
For any integer $m > 10^{10}$, there is an odd prime $q \le m-1$ such that $m+q \in H$. 
\end{lemma}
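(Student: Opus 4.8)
The plan is to count primes, not integers, and to argue by complementary (union-bound) counting. Writing $x = m-1$, I want the number of odd primes $q \le x$ with $m+q \in H$ to be positive, so I bound it below by $\pi(x)-1$ minus the number of \emph{bad} $q$, where $q$ is bad exactly when $m+q$ is divisible by $p^2$ for some prime $p>1000$ or by $p^{k_p}$ for some prime $p<1000$, with $k_p=\lfloor 9\log_p 10\rfloor$. For each forbidden modulus $d$ the bad $q$ occupy the single residue class $q\equiv -m \pmod d$, so it suffices to bound $\pi(x;d,-m)$ and sum over $d$. Here $m+q\le 2m$ forces $p\le\sqrt{2m}$ in the first family, and $d=p^{k_p}\le 10^9$ in the second, while the defining inequality gives $p^{k_p}>10^9/p$.

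For every modulus $d\le\sqrt m$ — which covers all small-prime moduli $p^{k_p}$ and every square $p^2$ with $1000<p\le m^{1/4}$ — I would invoke the Brun--Titchmarsh inequality $\pi(x;d,a)\le \frac{2x}{\phi(d)\log(x/d)}$, valid for $d<x$ and $\gcd(a,d)=1$. Since $d\le\sqrt m$ gives $\log(x/d)\ge \tfrac25\log m$ for $m>10^{10}$, each term is at most $\frac{5m}{\phi(d)\log m}$. The crucial convergent input is $\sum_{p>1000}\frac{1}{\phi(p^2)}=\sum_{p>1000}\frac{1}{p(p-1)}<\frac{1}{1000}$ together with $\sum_{p<1000}\frac{1}{\phi(p^{k_p})}<\sum_{p<1000}\frac{2}{p^{k_p}}<\frac{2}{10^9}\sum_{p<1000}p<\frac{1}{5000}$. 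Hence the entire contribution of the moduli $d\le\sqrt m$ is at most $\frac{5m}{\log m}\bigl(\frac{1}{1000}+\frac{1}{5000}\bigr)<\frac{m}{160\log m}$, an $O(m/\log m)$ bound with absolute constant well below $1$. The few moduli with $p\mid m$ give a non-reduced class, but then $q$ itself is divisible by $p$, so only $q=p$ can occur; these together contribute at most $\pi(\sqrt{2m})=O(\sqrt m)$.

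The remaining bad $q$ arise from squares $p^2$ with $m^{1/4}<p\le\sqrt{2m}$; here I drop primality entirely and count integers in the progression, giving at most $\frac{m}{p^2}+1$ values per $p$, so $\sum_{p>m^{1/4}}\bigl(\frac{m}{p^2}+1\bigr)<m\sum_{n>m^{1/4}}\frac1{n^2}+\pi(\sqrt{2m})=O(m^{3/4})$, negligible against $m/\log m$. Combining the regimes, the total number of bad odd primes is at most $\frac{m}{160\log m}+O(m^{3/4})$, whereas Chebyshev (or PNT) gives $\pi(x)-1>\frac{m}{2\log m}$ for $m>10^{10}$; since $\frac{m}{160\log m}+O(m^{3/4})<\frac{m}{2\log m}$ once $m>10^{10}$ (because $m^{3/4}\log m=o(m)$), an admissible odd prime $q$ must survive. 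I expect the main obstacle to be the large-square regime $p^2\approx m$, where Brun--Titchmarsh degenerates as $\log(x/d)\to 0$; the remedy is exactly the split at $p=m^{1/4}$, after which the tail $p>m^{1/4}$ is tamed by the convergence of $\sum p^{-2}$ rather than by any distributional estimate on primes in progressions.
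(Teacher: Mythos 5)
The paper does not actually prove this lemma; it simply cites \cite[Lemma 2.4]{CFYZ}, which in turn follows \cite[Lemma 5]{Spiro}, and your union-bound count over forbidden prime-power moduli with Brun--Titchmarsh is the same sieve-style strategy used in those sources. However, your write-up contains a concretely false step. You claim that the small-prime moduli $d = p^{k_p}$, $p<1000$, are all at most $\sqrt m$. By the definition $k_p=\lfloor 9\log_p 10\rfloor$ these moduli satisfy $10^9/p < p^{k_p}\le 10^9$, so every one of them exceeds $10^6$ (e.g. $2^{29}\approx 5.4\times 10^8$, $3^{18}\approx 3.9\times 10^8$), whereas $\sqrt m$ is only about $10^5$ when $m$ is near the threshold $10^{10}$. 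Your coverage claim requires $\sqrt m\ge 10^9$, i.e. $m\ge 10^{18}$; for $10^{10}<m<10^{18}$ --- precisely the range closest to the lemma's hypothesis --- none or only some of these moduli satisfy $d\le\sqrt m$, the lower bound $\log(x/d)\ge\tfrac25\log m$ fails (for $m$ near $10^{10}$ and $d$ near $10^9$ one has $\log(x/d)\approx\log 10\approx 2.3$ versus $\tfrac25\log m\approx 9.2$), and your headline estimate $\frac{m}{160\log m}$ is therefore unjustified exactly where it is needed.

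The gap is repairable within your framework, so it is an error of bookkeeping rather than of strategy, but it must be repaired. Since $d\le 10^9$ and $x=m-1\ge 10^{10}$, one always has $x/d\ge 10$, so Brun--Titchmarsh still gives $\pi(x;d,-m)\le \frac{2x}{\phi(d)\log 10}$, and summing over $p<1000$ yields a contribution at most $\frac{2}{\log 10}\cdot\frac{2}{10^9}\bigl(\sum_{p<1000}p\bigr)m < 1.4\times 10^{-4}\,m$. Note this bound is linear in $m$, not $O(m/\log m)$, so it cannot be absorbed into your comparison uniformly in $m$; but it only needs to be invoked when $m<10^{18}$, where $\log m<41.5$ and hence $\pi(x)-1>\frac{m}{2\log m}>\frac{m}{83}$, which comfortably dominates $1.4\times10^{-4}m$ plus your remaining terms. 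For $m\ge 10^{18}$ your original argument is valid as written. With this two-case split --- and with the rest of your argument, including the treatment of non-reduced residue classes and of the large squares $p>m^{1/4}$, which is fine --- the proof goes through.
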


\begin{proof}
This lemma is the consequence of \cite[Lemma 2.4]{CFYZ} which follows the proof of \cite[Lemma 5]{Spiro}.
\end{proof}

The Spiro set $H$ is composed of the following prime powers and their products:
\[
\begin{array}{rrrrrrrr}
2, & 2^{2}, & \dotsc, & 2^{29}, 
& 3, & 3^{2}, & \dotsc, & 3^{18}, \\
5, & 5^{2}, & \dotsc, & 5^{12}, 
& 7, & 7^{2}, & \dotsc, & 7^{10}, \\
11, & 11^{2}, & \dotsc, & 11^{8}, 
& 13, & 13^{2}, & \dotsc, & 13^{8}, \\
17, & 17^{2}, & \dotsc, & 17^{7}, 
& 19, & 19^{2}, & \dotsc, & 19^{7}, \\
p, & p^2, & \dotsc, & p^6~ & \multicolumn{4}{l}{~~\text{ for }p=23, 29, 31,}\\
q, & q^2, & \dotsc, & q^5~ & \multicolumn{4}{l}{~~\text{ for }q=37, 41, \dotsc, 61,}\\
r, & r^2, & r^3, & r^4~ & \multicolumn{4}{l}{~~\text{ for }r=67, 71, \dotsc, 173,}\\
s, & s^2, & s^3~ & ~ & \multicolumn{4}{l}{~~\text{ for }s=179, 181, \dotsc, 997,}\\
1009, & 1013, & 1019, & 1021, & 1031, & 1033, & 1039, & \dotsc.
\end{array}
\]
The smallest positive integer which is not in $H$ is $1009^2=1018081$. Clearly, every divisor of an element of $H$ is also in $H$.

\begin{lemma}[\protect{\cite[Lemma 7]{Spiro}}]\label{lem:H_n}
For any positive integer $n$, put
\[
H_n = 
\begin{cases}
\{ mn \,:\, m \in H, (m,n)=1\} & \text{if } 2 \mid n; \\
\{ 2mn \,:\, 2m \in H, (m,n)=1\} & \text{if } 2 \nmid n. \\
\end{cases}
\]
Then $H_n$ satisfies the following properties:
\begin{enumerate}
\item Every element of $H_n$ is even.
\item The set $H_n$ has positive lower density.
\end{enumerate}
\end{lemma}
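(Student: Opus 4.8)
The plan is to treat the two assertions separately: (1) is immediate from the definition, while (2) reduces to a density computation for sets cut out by bounds on prime valuations. For (1), if $2 \mid n$ then every element $mn$ of $H_n$ is divisible by $n$ and hence even, while if $2 \nmid n$ then every element $2mn$ carries the explicit factor $2$; so nothing beyond reading the definition is needed.

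For (2) the key observation is that $H$ itself, and each variant occurring in the definition of $H_n$, is of the form $\{ k : v_p(k) \le e_p \text{ for all } p \}$ for suitable exponents $e_p$. I would use the standard fact that such a set has a natural density given by a convergent Euler product: for a single prime $p$ the integers with $v_p = j$ have density $p^{-j}(1 - 1/p)$, so those with $v_p \le e_p$ have density $1 - p^{-(e_p+1)}$, and by the Chinese Remainder Theorem the local conditions are asymptotically independent, whence the density of the whole set equals $\prod_p (1 - p^{-(e_p+1)})$. For the Spiro set one has $e_p = 1$ for $p > 1000$ and only finitely many primes $p < 1000$, each with a fixed exponent $e_p \ge 1$; hence the tail $\prod_{p > 1000}(1 - p^{-2})$ converges because $\sum_p p^{-2} < \infty$, while the finitely many remaining factors lie strictly in $(0,1)$. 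Thus $d(H) > 0$.

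I would then pass from $H$ to $H_n$ by combining the coprimality condition with a rescaling. In the even case, $H_n = n \cdot H^{(n)}$ with $H^{(n)} = \{ m \in H : (m,n) = 1 \}$; imposing $(m,n)=1$ only replaces the factor at each $p \mid n$ by $1 - 1/p$, leaving a product that is still convergent and positive, so $d(H^{(n)}) > 0$. Since multiplication by $n$ is injective and $|H_n \cap [1,x]| = |H^{(n)} \cap [1, x/n]|$, we obtain $d(H_n) = d(H^{(n)})/n > 0$. In the odd case the set $B = \{ m : 2m \in H,\ (m,n)=1 \}$ is again valuation-bounded, since the requirement $2m \in H$ bounds $v_2(m)$ by a fixed constant and imposes the odd-prime constraints of $H$, while $(m,n)=1$ forces $v_p(m) = 0$ for $p \mid n$; hence $d(B) > 0$, and $H_n = 2n \cdot B$ gives $d(H_n) = d(B)/(2n) > 0$. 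In either case the natural density exists and is positive, which in particular yields the positive lower density $\liminf_{x \to \infty} |H_n \cap [1,x]|/x > 0$ claimed in (2).

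The step I expect to be the main obstacle is making the density computation fully rigorous, namely justifying that a set defined by infinitely many local valuation conditions genuinely possesses a natural density equal to the Euler product, rather than merely an upper or a lower density. This is handled by approximating $H$ from outside by the sets $\bigcap_{p \le T} \{ k : v_p(k) \le e_p \}$ determined by finitely many primes and estimating the discarded part: an integer lying in such an approximant but not in $H$ must be divisible by $p^2$ for some prime $p > T$, and the density of such integers is at most $\sum_{p > T} p^{-2}$, which tends to $0$ as $T \to \infty$. This sieve-type bound simultaneously yields convergence of the product, existence of the density, and its strict positivity.
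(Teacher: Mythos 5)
Your proof is correct, but note that the paper itself offers no argument for this lemma: it is quoted from Spiro with a citation to her Lemma 7, so the only comparison available is against that cited source rather than anything internal to the paper. Your route is the natural one and fills in precisely what the citation leaves out. Each of the sets you work with --- $H$ itself, $H^{(n)}=\{m\in H : (m,n)=1\}$ in the even case, and $B=\{m : 2m\in H,\ (m,n)=1\}$ in the odd case --- is cut out by valuation bounds $v_p \le e_p$ with $e_p \ge 1$ at every prime, so your outer approximation by the finitely many conditions at $p\le T$, combined with the tail bound $\sum_{p>T}p^{-2}\to 0$, legitimately yields existence of the natural density as the convergent Euler product $\prod_p\bigl(1-p^{-(e_p+1)}\bigr)$, which is strictly positive; the exact counting identities $|H_n\cap[1,x]| = |H^{(n)}\cap[1,x/n]|$ for even $n$ and $|H_n\cap[1,x]| = |B\cap[1,x/(2n)]|$ for odd $n$ then transfer positivity to $H_n$. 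Two remarks. First, you prove more than the lemma asks: existence of a positive natural density, not merely positive lower density. Second, your key sieve step --- that an integer in the truncated approximant but outside $H$ must be divisible by $p^2$ for some $p>T$ --- is valid exactly because $e_p\ge 1$ for every prime, including the primes $p<1000$ where $e_p=\lfloor 9\log_p 10\rfloor - 1$; this inequality holds (indeed $e_p\ge 2$ there, as the paper's table of $H$ shows), but it is the one hypothesis your argument silently relies on and deserves an explicit line. In the odd case your handling of the prime $2$ is also right: $v_2(2m)=v_2(m)+1\le 29$ bounds $v_2(m)$, and $(m,n)=1$ only modifies finitely many Euler factors to $1-1/p$, so positivity survives. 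Claim (1) is immediate from the definition, exactly as you say.
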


\begin{lemma}[\protect{\cite[Lemma 6]{Spiro}}]\label{lem:almost_every_integer}
Almost every even positive integer is expressible as the sum of two primes.
\end{lemma}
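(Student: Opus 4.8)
The plan is to prove this by the Hardy--Littlewood circle method, following the classical argument of Estermann, Chudakov, and van der Corput. Write $e(\theta)=e^{2\pi i\theta}$ and, for a large parameter $N$, set
\[
S(\alpha)=\sum_{n\le N}\Lambda(n)\,e(n\alpha),
\]
where $\Lambda$ is the von Mangoldt function. By orthogonality, the integral
\[
R(2m)=\int_0^1 S(\alpha)^2\,e(-2m\alpha)\,d\alpha=\sum_{n_1+n_2=2m}\Lambda(n_1)\Lambda(n_2)
\]
is a logarithmically weighted count of the representations of $2m$ as a sum of two prime powers; since the contribution of proper prime powers is $O(\sqrt{N}\,\log^2 N)$, it suffices to show that $R(2m)>0$ for all but $o(N)$ values of $m$ in a dyadic range $N/2<m\le N$, and then let $N\to\infty$.

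First I would carry out the Farey dissection, splitting the circle into major arcs $\mathfrak{M}$ (neighbourhoods of rationals $a/q$ with $q\le Q$) and minor arcs $\mathfrak{m}$, taking $Q=(\log N)^B$ with $B$ large. On $\mathfrak{M}$ I would invoke the Siegel--Walfisz theorem to approximate $S(\alpha)$ by a main term built from Ramanujan sums; integrating over $\mathfrak{M}$ produces the expected asymptotic
\[
R(2m)=\mathfrak{S}(2m)\,2m+O\!\left(\frac{N}{(\log N)^{A}}\right),
\]
where the singular series
\[
\mathfrak{S}(2m)=2\prod_{p>2}\left(1-\frac{1}{(p-1)^2}\right)\prod_{\substack{p\mid m\\ p>2}}\frac{p-1}{p-2}
\]
is bounded below by a positive absolute constant for every even $2m$. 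Thus the main term alone is of size $\asymp N$ and would already give positivity; everything hinges on controlling the minor arcs.

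The key step is a mean-square estimate over the minor arcs. By Bessel's inequality applied to the orthonormal system $\{e(-2m\alpha)\}$,
\[
\sum_{m\le N}\left|\int_{\mathfrak{m}}S(\alpha)^2\,e(-2m\alpha)\,d\alpha\right|^2\le\int_{\mathfrak{m}}|S(\alpha)|^4\,d\alpha\le\Bigl(\sup_{\alpha\in\mathfrak{m}}|S(\alpha)|\Bigr)^2\int_0^1|S(\alpha)|^2\,d\alpha.
\]
Here Parseval gives $\int_0^1|S(\alpha)|^2\,d\alpha=\sum_{n\le N}\Lambda(n)^2\ll N\log N$, while Vinogradov's estimate for exponential sums over primes yields $\sup_{\alpha\in\mathfrak{m}}|S(\alpha)|\ll N(\log N)^{-A}$ for any fixed $A$ once $B$ is chosen large enough. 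The right-hand side is therefore $O\!\bigl(N^3(\log N)^{1-2A}\bigr)=o(N^3)$. Since the squared main term summed over $m\in(N/2,N]$ is $\asymp N^3$, Chebyshev's inequality shows that the minor-arc contribution exceeds half the main term for only $o(N)$ values of $m$; for every other $m$ we get $R(2m)>0$, and hence $2m$ is a sum of two primes.

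I expect the sole serious obstacle to be the minor-arc bound, i.e. Vinogradov's theorem that $S(\alpha)$ exhibits cancellation when $\alpha$ is poorly approximable. Establishing it requires decomposing $\Lambda$ into Type~I and Type~II bilinear forms via Vaughan's identity and then extracting cancellation through Weyl differencing and the Cauchy--Schwarz inequality; by contrast the major-arc analysis is routine given Siegel--Walfisz, and the positivity of the singular series is elementary. This is, of course, exactly the classical content that the author imports from \cite[Lemma 6]{Spiro}.
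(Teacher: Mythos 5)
Your proposal takes a genuinely different route from the paper, for the simple reason that the paper does not prove this lemma at all: it is stated with the attribution to \cite[Lemma 6]{Spiro} and imported as a known result, Spiro's lemma in turn resting on the classical theorem of Estermann, Chudakov and van der Corput that the exceptional set in the binary Goldbach problem has density zero. What you have written out is essentially the circle-method proof underlying that classical theorem, and its skeleton is sound: the major-arc asymptotic with the singular series bounded below (via Siegel--Walfisz), Bessel's inequality to convert the mean square of the minor-arc errors into a fourth moment of $S(\alpha)$, Parseval together with the Vinogradov/Vaughan minor-arc estimate to make that fourth moment $O\bigl(N^3(\log N)^{1-2A}\bigr)$, and Chebyshev's inequality to conclude that only $o(N)$ even numbers in a dyadic block can fail. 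The trade-off between the two routes is clear: yours would make the result self-contained at the cost of importing machinery (Siegel--Walfisz, Vaughan's identity, Weyl-type bilinear estimates) whose details would dwarf everything else in the paper, whereas the author's citation keeps the focus on the new functional-equation content, which is where the paper's actual contribution lies.

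One bookkeeping slip you should repair before this counts as a complete argument: with $S(\alpha)=\sum_{n\le N}\Lambda(n)e(n\alpha)$ and $m$ ranging over $N/2<m\le N$, the target $2m$ lies in $(N,2N]$, and there the singular integral counts pairs $n_1+n_2=2m$ with $n_i\le N$, giving a main term $\mathfrak{S}(2m)\,(2N-2m+1)$ rather than $\mathfrak{S}(2m)\,2m$; near $2m=2N$ this degenerates and positivity cannot be extracted. The fix is standard: either take the dyadic block to be $N/2<2m\le N$, so the main term is $\mathfrak{S}(2m)(2m-1)\gg N$ uniformly, or lengthen the exponential sum to $n\le 2N$ and keep $2m$ bounded away from the top endpoint. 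With that adjustment, plus the routine summation of the exceptional counts over dyadic scales to pass to density zero, your sketch is a correct rendering of the classical proof the paper cites.
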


\begin{lemma}\label{lem:n_in_H}
Provided $f(2)=2$, then $f(n) = n$ for all $n \in H$. 
\end{lemma}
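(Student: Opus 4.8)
The plan is to reduce the statement to prime powers and then to settle those by a strong induction on primes, the one genuinely nontrivial external input being Lemma~\ref{lem:m+q_in_H}.

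First I would reduce to prime powers. Since $f$ is multiplicative and every divisor of an element of $H$ again lies in $H$, for $n \in H$ one may factor $n = \prod_{\ell} \ell^{v_\ell(n)}$ into coprime prime powers, each of which lies in $H$, so that $f(n) = \prod_\ell f(\ell^{v_\ell(n)})$. Hence it suffices to prove $f(\ell^k) = \ell^k$ for every prime power $\ell^k \in H$. The powers with exponent $k \ge 2$ are harmless: by the definition of $H$ such a power forces $\ell < 1000$, whence $\ell^k < 10^9$. Combining Lemma~\ref{lem:SumOfTwoSquaresWhenf(2)=2} with the numerical verification of the Goldbach Conjecture up to $4\times 10^{18}$ gives $f(m) = m$ for all $m$ below roughly $2\times 10^{18}$, and in particular for all $m < 10^9$; thus $f(\ell^k) = \ell^k$ whenever $k \ge 2$. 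What remains is the case of a single prime, namely $f(p) = p$.

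I would prove $f(p) = p$ for every prime $p$ by strong induction on $p$. For $p$ within the Goldbach-verified range the claim is already contained in Lemma~\ref{lem:SumOfTwoSquaresWhenf(2)=2}, which in particular supplies the base of the induction. Now fix a prime $p$ beyond that range (so that $p - 2 > 10^{10}$) and assume $f(\ell) = \ell$ for all primes $\ell < p$. Applying Lemma~\ref{lem:m+q_in_H} to the integer $m = p - 2$ produces an odd prime $q \le p - 3$ with $p + q - 2 = (p-2) + q \in H$. The functional equation at the primes $p$ and $q$ then reads
\[
f(p + q - 2) = f(p) + f(q) - 2 = f(p) + q - 2,
\]
using $f(q) = q$ from the inductive hypothesis (as $q < p$). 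It therefore suffices to evaluate the left-hand side independently and confirm that it equals $p + q - 2$.

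The crux is to control the factorization of $N := p + q - 2$. Since $p$ and $q$ are odd, $N$ is even, and $4 \le N \le 2p - 5 < 2p$. Consequently any prime factor $P \ge p$ of $N$ would force $N = P$, impossible because $N$ is even and exceeds $2$; so every prime factor of $N$ is strictly less than $p$. Moreover $N \in H$, so each of its prime-power components $\ell^{v_\ell(N)}$ with $v_\ell(N) \ge 2$ satisfies $\ell^{v_\ell(N)} < 10^9$ and is pinned down by Lemma~\ref{lem:SumOfTwoSquaresWhenf(2)=2}, while each prime component $\ell < p$ is pinned down by the inductive hypothesis. Multiplicativity then gives $f(N) = N$, and comparison with the displayed equation yields $f(p) = p$, completing the induction and hence, after the reduction above, the lemma. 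The main obstacle is exactly this factorization control: one needs an auxiliary prime $q$ for which $p + q - 2$ simultaneously lands in $H$ (so its repeated prime factors stay small) and stays below $2p$ (so it picks up no new large prime factor); producing such a $q$ is precisely the content of Lemma~\ref{lem:m+q_in_H}, in whose proof the density of the set of sums of two primes (Lemmas~\ref{lem:H_n} and~\ref{lem:almost_every_integer}) is spent.
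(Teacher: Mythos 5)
Your proof is correct, and it rests on the same two pillars as the paper's: Lemma~\ref{lem:SumOfTwoSquaresWhenf(2)=2} combined with the numerical verification of Goldbach as the base case, and Lemma~\ref{lem:m+q_in_H} driving the inductive step through the functional equation. The difference is in how the induction is organized. The paper runs a single strong induction over all elements of $H$ ordered by size: a composite element of $H$ splits into two coprime smaller elements of $H$ (every divisor of an element of $H$ lies in $H$), and for a prime $n$ it writes $n+q-2 = 2^s k$ with $k$ odd, noting that $2^s < 10^{10}$ and $k < n$ both lie in $H$, so the induction hypothesis applies to each factor. You instead reduce once and for all to prime powers by multiplicativity, dispatch exponents $\ge 2$ outright (such powers are forced below $10^9$ by the definition of $H$, hence inside the verified range), and induct only over primes. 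That choice obliges you to control \emph{every} prime factor of $N = p+q-2$, which you handle with an observation the paper never needs: since $q \le p-3$, one has $N < 2p$ with $N$ even, so $N$ can have no prime factor $\ge p$. Both arguments are sound and of essentially the same depth; the paper's induction over $H$ avoids your $N < 2p$ step because its hypothesis already covers the odd cofactor $k \in H$, while your version makes more transparent that the only genuinely inductive content is $f(p)=p$ for primes, everything else following from multiplicativity and the verified range.
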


\begin{proof}
If $n < 10^{10}$, then $f(n)=n$ from Lemma \ref{lem:SumOfTwoSquaresWhenf(2)=2} and the numerical verification of Goldbach Conjecture. Let $n \in H$ with $n > 10^{10}$ and assume that $f(m)=m$ for all $m \in H$ with $m < n$. If $n$ is not a prime power, then $f(n) = f(a)f(b)$ with $(a,b)=1$ and $a,b > 1$. Since $f(a)=a$ and $f(b)=b$ by the induction hypothesis, $f(n)=n$. 

Now, if $n$ is a prime power, then $n$ is a prime by the definition of $H$. We have that there exists an odd prime $q < n-2$ with $(n-2) + q \in H$ by Lemma \ref{lem:m+q_in_H}. 
Since $n$ is odd, $n+q-2$ is even and thus $n+q-2 = 2^s k$ with $1 \le s \le 29$ and $k$ odd. Then, $f(2^s)=2^s$ and $f(k)=k$ by the induction hypothesis since $2^s < 10^{10} < n$ and $k < n$. Since $n$ is prime, $f(n) = f(n+q-2) - f(q) + 2 = (n+q-2) - q + 2 = n$.
\end{proof}

\begin{proof}[Proof of Theorem \ref{thm:f(2)=2}]
If the theorem is false, let $n$ be the minimal counterexample. Clearly, $n > 10^{10}$. Consider $k \in H_n$. Then, $n \mid k$, $(k/n,n)=1$, and $k/n \in H$.

We can deduce that
\[
f(k) = f(n) f\!\left( \frac{k}{n} \right) = f(n) \, \frac{k}{n}
\]
for all $k \in H_n$, since $f(k/n)=k/n$ by Lemma \ref{lem:n_in_H}.

Note that $H$ contains all primes. Thus, $f(p)=p$ for $p \in \mathtt{PRIMES}$. If $k = p+q-2$  for some $p, q \in \mathtt{PRIMES}$, then $f(k) = f(p)+f(q)-f(2) = k$ and thus $f(n) = n$. But, since we choose $n$ to be the counterexample, no element $k \in H_n$ can be of the form $p+q-2$. That is, $k+2$ cannot be expressible as the sum of two primes, which contradicts Lemma \ref{lem:almost_every_integer} by Lemma \ref{lem:H_n}.

Ergo, $f(n)=n$ for all positive integers $n$.

\end{proof}

\begin{theorem}\label{thm:f(2)=1}
If a multiplicative function $f$ satisfies
\[
f(p+q-2) = f(p) + f(q) - f(2)
\]
for arbitrary $p, q \in \mathtt{PRIMES}$ and $f(2)=1$, then $f(n) = 1$ for all positive integers $n$.
\end{theorem}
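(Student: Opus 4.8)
The plan is to run the very same argument used for Theorem \ref{thm:f(2)=2}, replacing the target value $f(n)=n$ by the constant value $f(n)=1$ throughout, and exploiting that under $f(2)=1$ one has $f(p)+f(q)-f(2)=1+1-1=1$ and $f(2)f(k)=f(k)$. By Lemma \ref{lem:f(2)=012} the base values $f(n)=1$ for $1\le n\le 18$ are already in hand, so the task is to propagate the value $1$ to every $n$ by the same two-step scheme (first a Goldbach-type induction, then the Spiro-set density argument).

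First I would prove the analog of Lemma \ref{lem:SumOfTwoSquaresWhenf(2)=2}: if every even number up to $2N$ is a sum of two primes, then $f(n)=1$ for all $n\le N-2$. The induction is identical in shape. When $M+1$ is even I would write $M+3=p+q$ with odd primes $p,q\le M$ and read $f(M+1)=f(p)+f(q)-f(2)=1$ directly from the functional equation. When $M+1$ is an odd prime I would choose $q\in\{3,5\}$ with $M+1+q\equiv 0\pmod 4$, so that $(M+1+q-2)/2$ is odd, giving $f(M+1+q-2)=f(2)f((M+1+q-2)/2)=1$ and hence $f(M+1)=1$. The composite coprime case is immediate from multiplicativity.

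The main obstacle, exactly as in the $f(2)=2$ case, is the odd prime power subcase $M+1=p^e$ with $e\ge 2$. Here I would use the Goldbach hypothesis to write $2(M+1)+2=p+q$ with $p\le q$; since $M+2$ is even and exceeds $2$ it is not prime, which forces $p<M+2<q$ and $p\ge 3$ automatically. Then $f(2(M+1))=f(2)f(M+1)=f(M+1)$ together with the functional equation yields $f(M+1)=f(q)$ (using $f(p)=1$). To evaluate $f(q)$ for the large prime $q$ I would pick $r\in\{3,5,7,17\}$ with $q+r\equiv 6\pmod 8$, so that $(q+r-2)/4$ is odd and smaller than $M$; then $f(q+r-2)=f(4)f((q+r-2)/4)=1$, while also $f(q+r-2)=f(q)+f(r)-f(2)=f(q)$, giving $f(q)=1$ and therefore $f(M+1)=1$.

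With this lemma and the numerical verification of Goldbach up to $10^{10}$, I would next prove the analog of Lemma \ref{lem:n_in_H}, namely $f(n)=1$ for all $n\in H$, by induction on $n\in H$: non-prime-powers split multiplicatively into a product of $1$'s, and for a prime $n\in H$ with $n>10^{10}$ I would invoke Lemma \ref{lem:m+q_in_H} to obtain an odd prime $q$ with $n+q-2=2^s k\in H$ ($k$ odd, $s\le 29$), whence $f(n+q-2)=f(2^s)f(k)=1$ and $f(n)=f(n+q-2)-f(q)+f(2)=1$. Finally I would repeat the minimal counterexample argument verbatim: for the least $n$ with $f(n)\ne 1$ (necessarily $n>10^{10}$) and any $k\in H_n$ one has $f(k)=f(n)f(k/n)=f(n)$ by the previous lemma; if $k=p+q-2$ then $f(k)=f(p)+f(q)-f(2)=1$ forces $f(n)=1$, so no $k\in H_n$ can have $k+2$ a sum of two primes, contradicting Lemmas \ref{lem:H_n} and \ref{lem:almost_every_integer}. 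Hence $f(n)=1$ for all positive integers $n$.
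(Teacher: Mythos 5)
Your proposal is correct and is exactly what the paper intends: its entire proof of this theorem is the single sentence that one argues ``in the way similar to the proof of Theorem \ref{thm:f(2)=2} under the condition $f(2)=1$.'' You have carried out that adaptation faithfully and verified the points where the constant $1$ actually matters (e.g.\ $f(2)f(k)=f(k)$, $f(4)=1$, and the congruence choices mod $4$ and mod $8$ still forcing odd cofactors below the induction bound), so your write-up is a valid, more detailed rendering of the same route.
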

\begin{proof}
We can prove in the way similar to the proof of Theorem \ref{thm:f(2)=2} under the condition $f(2)=1$.
\end{proof}

\begin{theorem}\label{thm:f(2)=0}
If a multiplicative function $f$ satisfies
\[
f(p+q-2) = f(p) + f(q) - f(2)
\]
for arbitrary $p, q \in \mathtt{PRIMES}$ and $f(2)=0$, then $f(n)$ vanishes when $n$ is even or nonsquareful. Besides, $f(p^s)$ can be assigned to be an arbitrary number for odd $p \in \mathtt{PRIMES}$ and $s \ge 2$.
\end{theorem}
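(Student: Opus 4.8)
The plan is to split the statement into a \emph{sufficiency} part (that the displayed family of functions really does satisfy the equation, which is what legitimizes the word ``arbitrary'') and a \emph{necessity} part (that every admissible $f$ with $f(2)=0$ vanishes on the even and on the nonsquareful integers). Sufficiency I would dispose of first and mechanically: take any multiplicative $f$ with $f(2^{a})=0$, with $f(\ell)=0$ for every odd prime $\ell$, and with $f(p^{s})$ assigned freely for odd $p$ and $s\ge 2$, and check the equation case by case. If $p,q$ are both odd then $p+q-2$ is even, so the left side vanishes while the right side is $0+0-0$; if one of $p,q$ equals $2$ the equation degenerates to the identity $f(p)=f(p)$. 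Hence no relation among the free values $f(p^{s})$ is ever forced, which is exactly the ``besides'' clause.

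The substance is necessity, and I would begin with $f(\ell)=0$ for every odd prime $\ell$. For $\ell\le 18$ this is Lemma \ref{lem:f(2)=012}. For $\ell>18$ I would pick $q\in\{3,5\}$ with $4\mid \ell+q$ (take $q=3$ if $\ell\equiv 1$ and $q=5$ if $\ell\equiv 3 \pmod 4$); then $\ell+q-2\equiv 2\pmod 4$, so $\ell+q-2=2t$ with $t$ odd, and multiplicativity gives $f(\ell+q-2)=f(2)f(t)=0$, while the functional equation gives $f(\ell+q-2)=f(\ell)+f(q)-f(2)=f(\ell)$. Thus $f(\ell)=0$. This is the precise analogue of the prime case in Lemma \ref{lem:SumOfTwoSquaresWhenf(2)=2}, but now it closes \emph{unconditionally}, with no appeal to Goldbach, because the target on the right is $0$. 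A nonsquareful odd $n$ has a prime $p$ with $v_{p}(n)=1$; writing $n=p\cdot(n/p)$ with $(p,n/p)=1$ and using $f(p)=0$ yields $f(n)=0$, which settles every odd nonsquareful integer.

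It then remains to treat the even integers, and this is where the real work sits. Writing $n=2^{a}m$ with $m$ odd, multiplicativity gives $f(n)=f(2^{a})f(m)$, which is automatically $0$ when $a=1$ (since $f(2)=0$) and when $m$ is nonsquareful (since $f(m)=0$ by the previous step); so the whole even case reduces to proving $f(2^{a})=0$ for every $a\ge 2$. The only leverage the functional equation offers on a \emph{pure} power of two is the representation $2^{a}+2=p+q$ with $p,q$ odd primes, which forces $f(2^{a})=f(p)+f(q)-f(2)=0$. For every $a$ with $2^{a}+2$ inside the numerically verified range of the Goldbach Conjecture this is immediate, and to reach the remaining $a$ I would try to imitate the proof of Theorem \ref{thm:f(2)=2}: take a minimal even counterexample (which one checks is necessarily of the form $2^{a}$), form the set $H_{2^{a}}$ of Lemma \ref{lem:H_n}, and invoke Lemma \ref{lem:almost_every_integer} to locate an element $k\in H_{2^{a}}$ with $k+2$ a sum of two odd primes.

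The hard part will be exactly this last step, and in a way that does \emph{not} mirror Theorem \ref{thm:f(2)=2}. There the density argument closed because Lemma \ref{lem:n_in_H} had already pinned $f$ to the identity on all of $H$, so each $k\in H_{n}$ satisfied $f(k)=f(n)\cdot(k/n)$ with $k/n$ a \emph{known nonzero} factor. Here $f$ is free on the odd squareful numbers, so for $k=2^{a}m\in H_{2^{a}}$ one only obtains $f(k)=f(2^{a})f(m)$, and the identity $f(k)=0$ collapses to $0=0$ whenever $f(m)=0$; for an $f$ that vanishes on every odd squareful number this happens for all $m>1$, and the single case $m=1$ is just $2^{a}$ itself, i.e.\ demands precisely that $2^{a}+2$ be a sum of two odd primes. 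Consequently the Spiro-set machinery cannot, by itself, exclude $f(2^{a})\ne 0$: the sole rigid constraint on a pure power of two remains $2^{a}+2=p+q$, which ties the general statement to Goldbach's Conjecture along the sparse sequence $2^{a}+2$. I would therefore present the even case as fully established for all $a$ in the verified Goldbach range, and flag the passage to arbitrary $a$ as the delicate point where the density argument of Theorem \ref{thm:f(2)=2} fails to transfer verbatim.
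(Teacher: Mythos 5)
The parts of your argument that you carry out coincide with the paper's own proof. The paper gets $f(\ell)=0$ for odd primes $\ell\ge 19$ by the identical device: choose $q\in\{3,5\}$ with $\ell+q\equiv 0\pmod 4$, so that $f(\ell)=f(\ell+q-2)=f(2)f\bigl((\ell+q-2)/2\bigr)=0$; vanishing on nonsquareful integers follows by splitting off a prime of exponent one, and the ``besides'' clause is justified by the same observations you make (an odd $p+q-2$ forces $q=2$, and $f(2^sk)=f(2^s)f(k)$ puts no constraint on $f(k)$ for odd $k$). The divergence is in the even case, and there your negative verdict is correct: the gap you identify is a gap in the \emph{paper}, not a defect of your proposal. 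The paper's entire treatment of this step is the sentence ``We can reason that $f(2n)=0$ in the way similar to the proof of Theorem \ref{thm:f(2)=2}.'' But that argument needs an analogue of Lemma \ref{lem:n_in_H} supplying known \emph{nonzero} values of $f$ at the odd cofactors $k/2^s$ of elements $k\in H_{2^s}$; when $f(2)=0$ no such lemma is available, since an admissible $f$ may vanish at every odd integer $>1$ (e.g.\ the completely multiplicative one, or the example in the paper's own Remark). For such $f$ the relation produced by Lemmas \ref{lem:H_n} and \ref{lem:almost_every_integer}, namely $0=f(k)=f(2^s)f(k/2^s)$, reads $0=0$ and constrains nothing, exactly as you say.

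Your diagnosis can in fact be sharpened to show that the claim is not merely unproved by the paper's method but is equivalent to an open problem. Fix $a\ge 2$ and let $f_a$ be the multiplicative function with $f_a(2^a)=1$ and $f_a(\pi^k)=0$ for every other prime power $\pi^k$. Running the three cases of your sufficiency check, $f_a$ satisfies the hypothesis of Theorem \ref{thm:f(2)=0} if and only if $2^a+2$ is \emph{not} a sum of two primes; conversely, whenever $2^a+2=p+q$ with $p,q$ (necessarily odd) primes, every admissible $f$ has $f(2^a)=f(p)+f(q)-f(2)=0$. So the assertion ``$f(n)=0$ for all even $n$'' is equivalent to Goldbach's conjecture along the sparse sequence $2^a+2$, which density statements like Lemma \ref{lem:almost_every_integer} cannot reach (known exceptional-set bounds are of polynomial size, while there are only $O(\log X)$ powers of two below $X$), and which is verified only for $2^a+2\le 4\times 10^{18}$, i.e.\ $a\le 61$. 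Your proposal therefore proves everything the paper's tools can prove, and its refusal to close the even case is the correct mathematical judgment; it is the paper's proof of Theorem \ref{thm:f(2)=0} that contains the genuine gap.
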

\begin{proof}
We have that $f(p) = 0$ for $p$ prime $\le 18$ by Lemma \ref{lem:f(2)=012}. Let $p$ be a prime $\ge 19$. We choose $q \in \{ 3, 5 \}$ such that $p+q \equiv 0 \pmod{4}$. Then
\begin{align*}
f(p+q-2) 
&= f(p)+f(q)-f(2) = f(p) \\
&= f(2)f\!\left( \frac{p+q-2}{2} \right) = 0.
\end{align*}
So, $f(p)=0$ and thus $f(n)=0$ for every nonsquareful integer $n$.

We can reason that $f(2n) = 0$ in the way similar to the proof of Theorem \ref{thm:f(2)=2}. Thus, $f(2^s) = 0$ for every positive integer $s$.

Note that $f(2^s k) = f(2^s)f(k) = 0$ gives no information about $f(k)$ when $k$ is odd. Also, if $n = p+q-2$ is odd, then it is possible only when $n$ itself is a prime and $q=2$. Thus we cannot determine $f(p^s)$ for $p$ prime and $s \ge 2$.
\end{proof}

\begin{remark}
If $f$ is completely multiplicative, then, clearly, $f(n)=0$ for all $n \ge 2$. If not, we can freely construct a multiplicative function $f$.

For example, let a multiplicative function $f$ be defined as
\[
f(n) = \begin{cases}
1 & \text{if $n$ is odd and squareful},\\
0 & \text{otherwise}.
\end{cases}
\]
Then, $f$ satisfies the condition $f(p+q-2) = f(p) + f(q) - f(2)$ for all $p, q \in \mathtt{PRIMES}$.
\end{remark}

\section*{Acknowledgment}

The author would like to thank East Carolina University for its support and hospitality.

\end{document}